\newtheorem{theorem}{Theorem}
\newtheorem{lemma}{Lemma}
\newtheorem*{FS}{Theorem FS}
\begin{document}
\author{Ushangi Goginava }
\address{U. Goginava, Department of Mathematics, Faculty of Exact and Natural
Sciences, Tbilisi State University, Chavcha\-vadze str. 1, Tbilisi 0128,
Georgia}
\email{zazagoginava@gmail.com}
\title[Strong Summability]{Strong Summability of Two-Dimensional Vilenkin
Fourier series }
\date{}
\maketitle

\begin{abstract}
In this paper we study the exponential uniform strong summability of
two-dimensional Vilenkin-Fourier series. In particular, it is proved that
the two-dimensional Vilenkin-Fourier series of the continuous function $f$
is uniformly strong summable to the function $f$ exponentially in the power $%
1/2$. Moreover, it is proved that this result is best possible.
\end{abstract}

\footnotetext{%
2010 Mathematics Subject Classification 42C10 .
\par
Key words and phrases: Vilenkin function, Best approximation, Strong
Approximation.
\par
The research was supported by Shota Rustaveli National Science Foundation
grant no.DI/9/5-100/13 (Function spaces, weighted inequalities for integral
operators and problems of summability of Fourier series)}

\section{Introduction}

It is known that there exist continuous functions the trigonometric (Walsh)
Fourier series of which do not converge. However, as it was proved by Fejér 
\cite{Fe} in 1905, the arithmetic means of the differences between the
function and its Fourier partial sums converge uniformly to zero. The
problem of strong summation was initiated by Hardy and Littlewood \cite{HL}.
They generalized Fejér's result by showing that the strong means also
converge uniformly to zero for any continous function. The investigation of
the rate of convergence of the strong means was started by Alexits \cite{AK}%
. Many papers have been published which are closely related with strong
approximation and summability. We note that a number of signficant results
are due to Leindler \cite{Le1,Le2,Le3}, Totik \cite{To1,To2,To3}, Gogoladze 
\cite{Go}, Goginava, Gogoladze, Karagulyan \cite{GGKCA}. Leindler has also
published a monograph \cite{Le4}.

The results on strong summation and approximation of trigonometric Fourier
series have been extended for several other orthogonal systems. For
instance, concerning the Walsh system see Schipp \cite{Sch1,Sch2,Sch3},
Fridli, Schipp \cite{FS,FS2}, Fridli \cite{kacz}, Rodin \cite{Ro1},
Goginava, Gogoladze \cite{GGSMH, GGCA}, Gat, Goginava, Karagulyan \cite%
{GGKAM,GGKJMAA}, Goginava, Gogoladze, Karagulyan \cite{GGKCA} and concerning
the Ciselski system see Weisz \cite{We1,We2}. The summability of multiple
Walsh-Fourier series have been investigated in (\cite{GoAM}, \cite{GoSM}, 
\cite{Webook2})

Fridli and Schipp \cite{FS2} proved that the following is true.

\begin{FS}
Let $\Phi $ stand for the trigonometric or the Walsh system, and let $\psi $
be a monotonically increasing function defined on $[0,\infty )$ for which $%
\lim_{u\rightarrow 0+}\psi (u)=0$. Then 
\begin{equation*}
\lim\limits_{n\rightarrow \infty }\frac{1}{n}\sum\limits_{k=1}^{n}\psi
\left( \left\vert S_{k}^{\Phi }f(x)-f(x)\right\vert \right)
=0\,\,\,\,\,\,\,\,\,\,\,\,(f\in C(G_{2}))
\end{equation*}%
if and only if there exists $A>0$ such that $\psi (t)\leq \exp (At)$ $(0\leq
t<\infty )$. Moreover, the convergence is uniform in $x$.
\end{FS}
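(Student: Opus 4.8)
The plan is to prove the two implications separately, the equivalence following at once. Throughout, $S_k^{\Phi}$ denotes the $k$-th partial sum and $\|\cdot\|_\infty$ the sup-norm on $C(G_2)$, and I fix a point $x_0$ for the sharpness part.

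\emph{Sufficiency.} Assume $\psi(t)\le\exp(At)$ for some $A>0$ and take $f\in C(G_2)$. The first step is to reduce to a function of small norm by approximation: given $\varepsilon>0$, choose a $\Phi$-polynomial $P$ of degree $<N$ with $\|f-P\|_\infty<\varepsilon$ and put $g:=f-P$. Since $S_k^{\Phi}P=P$ for $k\ge N$, for such $k$ one has $S_k^{\Phi}f-f=S_k^{\Phi}g-g$, hence $|S_k^{\Phi}f(x)-f(x)|\le|S_k^{\Phi}g(x)|+\varepsilon$; the finitely many indices $k<N$ contribute $O(N/n)\to0$ and are harmless. The crux is a uniform sub-exponential distributional estimate for the partial sums,
\[
\frac{1}{n}\#\{1\le k\le n:|S_k^{\Phi}g(x)|>\lambda\}\le C\exp(-c\lambda/\|g\|_\infty)\qquad(\lambda>0),
\]
with $C,c$ absolute and the bound uniform in $x$ and $n$. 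For the Walsh system this rests on the dyadic block decomposition of the Walsh--Dirichlet kernels together with the logarithmic growth $\|D_k\|_1=O(\log k)$ of the Lebesgue constants; for the trigonometric system it is the analogous known estimate.

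Granting this estimate, I split the range of $k$ at a level $\delta>0$. On the set where $|S_k^{\Phi}g(x)|\le\delta$, monotonicity of $\psi$ and the hypothesis $\psi(0+)=0$ give $\psi(|S_k^{\Phi}g(x)|+\varepsilon)\le\psi(\delta+\varepsilon)$. On the complementary set I use $\psi(t)\le\exp(At)$ and the layer-cake formula with the distributional estimate:
\[
\frac{1}{n}\sum_{|S_k^{\Phi}g(x)|>\delta}\exp\bigl(A|S_k^{\Phi}g(x)|+A\varepsilon\bigr)\le C'\exp\bigl(-(c/\varepsilon-A)\delta\bigr),
\]
valid once $\varepsilon$ is small enough that $A\varepsilon<c$. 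Taking, for instance, $\delta=\sqrt{\varepsilon}$ makes the good part $\psi(\delta+\varepsilon)$ tend to $0$ (as $\psi(0+)=0$) and the bad part tend to $0$ (as $(c/\varepsilon-A)\sqrt{\varepsilon}\to\infty$), both uniformly in $x$; letting $\varepsilon\to0$ yields the uniform convergence to $0$.

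\emph{Necessity.} I argue by contraposition: suppose that for every $A>0$ the bound $\psi(t)\le\exp(At)$ fails for arbitrarily large $t$, i.e.\ $\limsup_{t\to\infty}\frac{\log\psi(t)}{t}=\infty$. Fix a summable sequence $\lambda_j\downarrow0$ (say $\lambda_j=2^{-j}$), where $c>0$ is the absolute constant in the Lebesgue-constant lower bound below. Using the assumed growth, choose $t_j\uparrow\infty$ with $\psi(t_j)\ge\exp(A_j t_j)$ for $A_j:=4/(c\lambda_j)\to\infty$, and set $k_j:=\exp(2t_j/(c\lambda_j))$, taken along indices whose Lebesgue constant satisfies $\|D_{k_j}\|_1\ge c\log k_j$ (such indices occur at every scale, and the slack in $t_j$ afforded by monotonicity allows this matching). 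Since the norm of the functional $h\mapsto S_{k_j}^{\Phi}h(x_0)$ on $C(G_2)$ equals $\|D_{k_j}\|_1$, there are $h_j\in C(G_2)$ with $\|h_j\|_\infty\le1$, frequencies localized near $k_j$, and $|S_{k_j}^{\Phi}h_j(x_0)|\ge c\log k_j$. Then $f:=\sum_j\lambda_j h_j\in C(G_2)$ (uniform convergence); since the blocks are essentially disjoint, one finds $|S_{k_j}^{\Phi}f(x_0)-f(x_0)|\ge\tfrac{c}{2}\lambda_j\log k_j=t_j$ for large $j$. Keeping only the single index $k=k_j$,
\[
\frac{1}{k_j}\sum_{k=1}^{k_j}\psi\bigl(|S_k^{\Phi}f(x_0)-f(x_0)|\bigr)\ge\frac{\psi(t_j)}{k_j}\ge\frac{\exp(A_j t_j)}{\exp(2t_j/(c\lambda_j))}=\exp\Bigl(\bigl(A_j-\tfrac{2}{c\lambda_j}\bigr)t_j\Bigr)\to\infty,
\]
so the strong mean at $x_0$ cannot converge to $0$. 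Hence summability for every $f\in C(G_2)$ forces the exponential bound, completing the equivalence.

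I expect the uniform sub-exponential distributional estimate for $\{S_k^{\Phi}g\}_k$ to be the main obstacle; once it is in hand, the positive direction reduces to the elementary good/bad splitting, and the sharpness follows from the gliding-hump construction, calibrated through the adaptive choice of amplitudes $\lambda_j$ and matching scales $t_j$ to the super-exponential growth of $\psi$.
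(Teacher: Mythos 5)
First, a point of order: the paper does not prove Theorem FS. It is quoted as a known result of Fridli and Schipp \cite{FS2}, so there is no in-paper proof to set your attempt against; the closest material is the machinery the paper develops for its two-dimensional analogue (Lemma \ref{mainlemma}, Lemma \ref{BAest}, and the construction in the proof of Theorem \ref{strongconv}(b)). Judged against that, your outline has the right architecture but a genuine gap in the sufficiency direction. The ``uniform sub-exponential distributional estimate''
\[
\frac{1}{n}\#\{k\le n:\ |S_k^{\Phi}g(x)|>\lambda\}\le C\exp\bigl(-c\lambda/\|g\|_\infty\bigr)
\]
is not a technical preliminary you may grant yourself: it \emph{is} the positive half of the theorem, and the justification you offer --- the block decomposition of the Dirichlet kernels plus $\|D_k\|_1=O(\log k)$ --- cannot yield it. The Lebesgue-constant bound gives only the pointwise estimate $|S_k^{\Phi}g(x)|\le C\log k\,\|g\|_\infty$ for each fixed $k$, which says nothing about how rarely $|S_k^{\Phi}g(x)|$ exceeds $\lambda$ as $k$ runs over $[1,n]$; no exponential decay in $\lambda$, uniform in $n$, follows. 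What is actually required is a Sidon-type inequality, equivalently the moment bound $\bigl(2^{-n}\sum_{k=2^n}^{2^{n+1}-1}|S_k^{\Phi}g(x)|^p\bigr)^{1/p}\le Cp\,\|g\|_\infty$ for all $p\ge1$, proved by writing $|S_kg|^p$ as a $p$-fold integral against $\prod_{i=1}^{p}D_k(s_i)$ and invoking the estimate $\int 2^{-n}\bigl|\sum_{k=2^n}^{2^{n+1}-1}\prod_{i=1}^{p}D_k(s_i)\bigr|\,d\mu\le(cp)^{p}$ (Schipp's lemma; Glukhov's lemma in the paper is its Vilenkin, multi-parameter version, and the paper's Lemma \ref{mainlemma} is exactly the two-dimensional form of this step). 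Chebyshev with $p\sim\lambda/\|g\|_\infty$ then gives your distributional estimate. Without this lemma your good/bad splitting, which is otherwise correctly executed, has nothing to run on.

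The necessity direction is essentially sound and parallels the gliding-hump the paper performs in Theorem \ref{strongconv}(b), with the amplitudes $\lambda_j$ and scales $t_j$ calibrated to the superexponential growth of $\psi$ in the same way. Two details need more care than you give them. The norming function supplied by duality for $h\mapsto S_{k_j}^{\Phi}h(x_0)$ is not automatically frequency-localized, so ``the blocks are essentially disjoint'' must be secured by an explicit construction; the paper does this by taking each hump to be a unimodular multiple of $\exp(-i\arg\overline{D}_{N_{A_j}})$ on supports disjoint in $j$, and then estimating the three interaction terms $J_1,J_2,J_3$ by hand. And in the Walsh case $\|D_k\|_1\ge c\log k$ fails for general $k$ (e.g. $k=2^n$), so the indices $k_j$ must be taken of the special form $\sum_i 2^{2i}$, which means the matching $k_j=\exp(2t_j/(c\lambda_j))$ can only be arranged up to a bounded factor --- harmless, but it should be stated rather than absorbed into ``such indices occur at every scale.''
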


In this paper we study the exponential uniform strong summability of
two-dimensional Vilenkin-Fourier series. In particular, it is proved that
the two-dimensional Vilenkin-Fourier series of the continuous function $f$
is uniformly strong summable to the function $f$ exponentially in the power $%
1/2$. Moreover, it is proved that this result is best possible.

Let $\mathbb{N}_{+}$ denote the set of positive integers, $\mathbb{N}:=%
\mathbb{N}_{+}\cup \{0\}.$ Let $m:=(m_{0},m_{1},...)$ denote a sequence of
positive integers not less than $2.$ Denote by $Z_{m_{k}}:=\{0,1,...,m_{k}-1%
\}$ the additive group of integers modulo $m_{k}$. Define the group $G_{m}$
as the complete direct product of the groups $Z_{m_{j}},$ with the product
of the discrete topologies of $Z_{m_{j}}$'s. The direct product $\mu $ of
the measures 
\begin{equation*}
\mu _{k}(\{j\}):=\frac{1}{m_{k}}\quad (j\in Z_{m_{k}})
\end{equation*}%
is the Haar measure on $G_{m}$ with $\mu (G_{m})=1.$ If the sequence $m$ is
bounded, then $G_{m}$ is called a bounded Vilenkin group. The elements of $%
G_{m}$ can be represented by sequences $x:=(x_{0},x_{1},...,x_{j},...)$, $%
(x_{j}\in Z_{m_{j}}).$ The group operation $+$ in $G_{m}$ is given by $%
x+y=\left( x_{0}+y_{0}\left( \text{mod}m_{0}\right) ,...,x_{k}+y_{k}\left( 
\text{mod}m_{k}\right) ,...\right) $ , where $x=\left(
x_{0},...,x_{k},...\right) $ and $y=\left( y_{0},...,y_{k},...\right) \in
G_{m}$. The inverse of $+$ will be denoted by $-$.

It is easy to give a base for the neighborhoods of $G_{m}:$ 
\begin{equation*}
I_{0}(x):=G_{m},
\end{equation*}%
\begin{equation*}
I_{n}(x):=\{y\in G_{m}|y_{0}=x_{0},...,y_{n-1}=x_{n-1}\}
\end{equation*}%
for $x\in G_{m},\ n\in {\mathbb{N}}$. Define $I_{n}:=I_{n}(0)$ for $n\in {%
\mathbb{N}}_{+}$. Set $e_{n}:=\left( 0,...,0,1,0,...\right) \in G_{m}$ the $%
n\,$th\thinspace coordinate of which is 1 and the rest are zeros $\left(
n\in \mathbb{N}\right) .$

If we define the so-called generalized number system based on $m$ in the
following way: $M_{0}:=1,M_{k+1}:=m_{k}M_{k}(k\in {\mathbb{N}}),$ then every 
$n\in {\mathbb{N}}$ can be uniquely expressed as $n=\sum\limits_{j=0}^{%
\infty }n_{j}M_{j},$ where $n_{j}\in Z_{m_{j}}\ (j\in {\mathbb{N}}_{+})$ and
only a finite number of $n_{j}$'s differ from zero. We use the following
notation. Let (for $n>0$) $|n|:=\max \{k\in {\mathbb{N}}:n_{k}\ne 0\}$ (that
is, $M_{|n|}\le n<M_{|n|+1}$).

Next, we introduce on $G_{m}$ an orthonormal system which is called the
Vilenkin system. At first define the complex valued functions $%
r_{k}(x):G_{m}\to {\mathbb{C}}$, the generalized Rademacher functions in
this way 
\begin{equation*}
r_{k}(x):=\exp \frac{2\pi \imath x_{k}}{m_{k}}\ (\imath ^{2}=-1,\ x\in
G_{m},\ k\in \mathbb{N}).
\end{equation*}

\noindent Now define the Vilenkin system $\psi := (\psi_n : n\in{\mathbb{N}}%
) $ on $G_{m}$ as follows. 
\begin{equation*}
\psi_{n}(x):=\prod\limits_{k=0}^{\infty}r_{k}^{n_{k}}(x)\quad (n\in\mathbb{N}%
).
\end{equation*}

\noindent Specifically, we call this system the Walsh-Paley one if $m\equiv
2.$

\noindent The Vilenkin system is orthonormal and complete in $L_{1}(G_{m})$.
It is well-known that $\psi _{n}(x)\psi _{n}(y)=\psi _{n}(x+y)$, $\left\vert
\psi _{n}(x)\right\vert =1\left( k,n\in \mathbb{N}\right) ,$ $\psi _{n}(-x)=%
\overline{\psi }_{n}(x)$ \cite{SWS}.

\noindent Now, introduce analogues of the usual definitions of the Fourier
analysis. If $f\in L_{1}(G_{m})$ we can establish the following definitions
in the usual way:

\noindent Fourier coefficients: 
\begin{equation*}
\widehat{f}(k):=\int_{G_{m}}f\overline{\psi }_{k}d\mu \qquad (k\in {\mathbb{N%
}}),
\end{equation*}%
partial sums: 
\begin{equation*}
S_{n}f:=\sum_{k=0}^{n-1}\widehat{f}(k)\psi _{k}\qquad (n\in {\mathbb{N}}%
_{+},\,\,S_{0}f:=0).
\end{equation*}%
Dirichlet kernels: 
\begin{equation*}
D_{n}:=\sum_{k=0}^{n-1}\psi _{k}\qquad (n\in {\mathbb{N}}_{+}).
\end{equation*}

Recall that 
\begin{equation}
D_{M_{n}}\left( x\right) =\left\{ 
\begin{array}{ll}
M_{n}, & \mbox{if }x\in I_{n}, \\ 
0, & \mbox{if }x\in G_{m}\backslash I_{n}.%
\end{array}%
\right. ,  \label{dir}
\end{equation}%
\begin{equation}
D_{n}\left( x\right) =\psi _{n}\left( x\right) \sum\limits_{j=0}^{\infty
}D_{M_{j}}\left( x\right)
\sum\limits_{q=m_{j}-n_{j}}^{m_{j}-1}r_{j}^{q}\left( x\right) ,\text{ \ }%
\left( f\in L_{1}\left( G_{m}\right) ,n\in \mathbb{N}\right) .  \label{dir2}
\end{equation}

It is well known that 
\begin{equation*}
\sigma _{n}\left(f; x\right) =\int\limits_{G_{m}}f\left( t\right) K_{n}\left(
x-t\right) d\mu \left( t\right) .
\end{equation*}

Next, we introduce some notation with respect to the theory of
two-dimensional Vilenkin system. Let us fix $d\geq 1,d\in \mathbb{N}_{+}$.
For Vilenkin group $G_{m}$ let $G_{m}^{d}$ be its Cartesian product $%
G_{m}\times \cdots \times G_{m}$ taken with itself $d$-times. Denote by $%
\mathbf{\mu }$ the product measure $\mu \times \cdots \times \mu $. The
rectangular partial sums of the two-dimensional Vilenkin-Fourier series are
defined as follows:

\begin{equation*}
S_{M,N}(f;x,y):=\sum\limits_{i=0}^{M-1}\sum\limits_{j=0}^{N-1}\widehat{f}%
\left( i,j\right) \psi _{i}\left( x\right) \psi _{j}\left( y\right) ,
\end{equation*}%
where the number 
\begin{equation*}
\widehat{f}\left( i,j\right) =\int\limits_{G_{m}\times G_{m}}f\left(
x,y\right) \overline{\psi }_{i}\left( x\right) \overline{\psi }_{j}\left(
y\right) d\mathbf{\mu }\left( x,y\right) .
\end{equation*}%
is said to be the $\left( i,j\right) $th Vilenkin-Fourier coefficient of the
function\thinspace $f.$

Denote 
\begin{equation*}
S_{n}^{\left( 1\right) }\left( f;x,y\right) :=\sum\limits_{l=0}^{n-1}%
\widehat{f}\left( l,y\right) \psi _{l}\left( x\right) ,
\end{equation*}%
\begin{equation*}
S_{m}^{\left( 2\right) }\left( f;x,y\right) :=\sum\limits_{r=0}^{m-1}%
\widehat{f}\left( x,r\right) \psi _{r}\left( y\right) ,
\end{equation*}%
where 
\begin{equation*}
\widehat{f}\left( l,y\right) =\int\limits_{G_{m}}f\left( x,y\right) \psi
_{l}\left( x\right) d\mu \left( x\right)
\end{equation*}%
and 
\begin{equation*}
\widehat{f}\left( x,r\right) =\int\limits_{G_{m}}f\left( x,y\right) \psi
_{r}\left( y\right) d\mu \left( y\right) .
\end{equation*}

\section{Best Approximation}

Denote by $E_{lr}\left( f\right) $ the best approximation of a function $%
f\in C\left( G_{m}^{2}\right) $ by Vilenkin polynomials of degree $\leq l$
of a variable $x$ and of degree $\leq r$ of a variable $y$ and let $%
E_{l}^{\left( 1\right) }\left( f\right) $ be the partial best approximation
of a a function $f\in C\left( G_{m}^{2}\right) $ by Vilenkin polynomials of
degree $\leq l$ of a variable $x$, whose coefficients are continuous
functions of the remaining variable $y$. Analogously, we can define $%
E_{r}^{\left( 2\right) }\left( f\right) $.

Let $M_{L}\leq l<M_{L+1},M_{R}\leq r<M_{R+1}$ and $E_{M_{L},M_{R}}\left(
f\right) :=\left\Vert f-T_{M_{L},M_{R}}\right\Vert _{C},$ where $%
T_{M_{L},M_{R}}$ is Vilenkin polynomial of best approximation of function $f$%
. Since (see (\ref{dir})) 
\begin{equation*}
\left\Vert S_{M_{L},M_{R}}\left( f\right) \right\Vert _{C}\leq \left\Vert
f\right\Vert _{C},
\end{equation*}%
we can write 
\begin{eqnarray}
&&\left\vert S_{lr}\left( f;x,y\right) -f\left( x,y\right) \right\vert
\label{est} \\
&\leq &\left\vert S_{lr}\left( f-S_{M_{L},M_{R}}\left( f\right) ;x,y\right)
\right\vert +\left\Vert S_{M_{L},M_{R}}\left( f\right) -f\right\Vert _{C} 
\notag \\
&\leq &\left\vert S_{lr}\left( f-S_{M_{L},M_{R}}\left( f\right) ;x,y\right)
\right\vert +\left\Vert S_{M_{L},M_{R}}\left( f-T_{M_{L},M_{R}}\right)
\right\Vert _{C}  \notag \\
&&+\left\Vert f-T_{M_{L},M_{R}}f\right\Vert _{C}  \notag \\
&\leq &\left\vert S_{lr}\left( f-S_{M_{L},M_{R}}\left( f\right) ;x,y\right)
\right\vert +2E_{M_{L},M_{R}}\left( f\right) .  \notag
\end{eqnarray}

Now, we prove that the following inequality holds 
\begin{equation}
E_{M_{L},M_{R}}\left( f\right) \leq 2E_{M_{L}}^{\left( 1\right) }\left(
f\right) +2E_{M_{R}}^{\left( 2\right) }\left( f\right) .  \label{PBA}
\end{equation}%
Indeed, we have 
\begin{eqnarray}
E_{M_{L},M_{R}}\left( f\right) &\leq &\left\Vert f-S_{M_{L},M_{R}}\left(
f\right) \right\Vert _{C}=\left\Vert f-S_{M_{L}}^{\left( 1\right) }\left(
S_{M_{R}}^{\left( 2\right) }\left( f\right) \right) \right\Vert _{C}
\label{PBA1} \\
&\leq &\left\Vert f-S_{M_{L}}^{\left( 1\right) }\left( f\right) \right\Vert
_{C}+\left\Vert S_{M_{L}}^{\left( 1\right) }\left( S_{M_{R}}^{\left(
2\right) }\left( f\right) -f\right) \right\Vert _{C}  \notag \\
&\leq &\left\Vert f-S_{M_{L}}^{\left( 1\right) }\left( f\right) \right\Vert
_{C}+\left\Vert S_{M_{R}}^{\left( 2\right) }\left( f\right) -f\right\Vert
_{C}.  \notag
\end{eqnarray}%
Let $T_{M_{L}}^{\left( 1\right) }\left( x,y\right) $ be a polynomial of the
best approximation $E_{M_{L}}^{\left( 1\right) }\left( f\right) $. Then 
\begin{eqnarray}
\left\Vert S_{M_{L}}^{\left( 1\right) }\left( f\right) -f\right\Vert _{C}
&\leq &\left\Vert f-T_{M_{L}}^{\left( 1\right) }\right\Vert _{C}+\left\Vert
S_{M_{L}}^{\left( 1\right) }\left( f-T_{M_{L}}^{\left( 1\right) }\right)
\right\Vert _{C}  \label{PBA2} \\
&\leq &2\left\Vert f-T_{M_{L}}^{\left( 1\right) }\right\Vert
_{C}=2E_{M_{L}}^{\left( 1\right) }\left( f\right) .  \notag
\end{eqnarray}%
Analogously, we can prove that 
\begin{equation}
\left\Vert S_{M_{R}}^{\left( 2\right) }\left( f\right) -f\right\Vert
_{C}\leq 2E_{M_{R}}^{\left( 2\right) }\left( f\right) .  \label{PBA3}
\end{equation}%
Combining (\ref{PBA1})-(\ref{PBA3}) we obtain (\ref{PBA}).

It is easy to show that 
\begin{equation}
\left\Vert f-S_{M_{L}M_{R}}\left( f\right) \right\Vert _{C}\leq
2E_{M_{L},M_{R}}\left( f\right) .  \label{mixBA}
\end{equation}

\section{Main results}

\begin{theorem}
\label{estBA}Let $f\in C\left( G_{m}^{2}\right) $. Then the inequality 
\begin{eqnarray*}
&&\left\Vert \frac{1}{nm}\sum\limits_{l=1}^{n}\sum\limits_{r=1}^{m}\left(
e^{A\left\vert S_{lr}\left( f\right) -f\right\vert ^{1/2}}-1\right)
\right\Vert _{C} \\
&\leq &\frac{c\left( f,A\right) }{n}\sum\limits_{l=1}^{n}\sqrt{E_{l}^{\left(
1\right) }\left( f\right) }+\frac{c\left( f,A\right) }{m}\sum%
\limits_{r=1}^{m}\sqrt{E_{r}^{\left( 2\right) }\left( f\right) }
\end{eqnarray*}%
is satisfied for any $A>0$, where $c\left( f,A\right) $ is a positive
constant depend on $A$ and $f$.
\end{theorem}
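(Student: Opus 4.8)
The plan is to exploit the product structure $S_{lr}=S_{l}^{(1)}S_{r}^{(2)}$ through the operator identity
\[
S_{lr}f-f=(S_{l}^{(1)}-I)(S_{r}^{(2)}-I)f+(S_{l}^{(1)}f-f)+(S_{r}^{(2)}f-f).
\]
Since $t\mapsto t^{1/2}$ is subadditive and $e^{a+b+c}-1\le\frac{1}{3}\big[(e^{3a}-1)+(e^{3b}-1)+(e^{3c}-1)\big]$ by the arithmetic--geometric mean inequality, the exponential strong mean splits into a mixed term governed by $(S_{l}^{(1)}-I)(S_{r}^{(2)}-I)f$ and two pure terms governed by $S_{l}^{(1)}f-f$ and $S_{r}^{(2)}f-f$. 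All three are treated by the same device: a block decomposition of the index ranges together with a one-dimensional strong $L^{k}$ estimate for the Vilenkin partial sums. The pure terms carry a single oscillation factor, so this device handles them with room to spare (it even yields the exponent $1$), and they are bounded by $\frac{c}{n}\sum_{l=1}^{n}\sqrt{E_{l}^{(1)}(f)}$ and $\frac{c}{m}\sum_{r=1}^{m}\sqrt{E_{r}^{(2)}(f)}$ respectively.

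For the mixed term I would work block by block. Fix $M_{L}\le l<M_{L+1}$ and $M_{R}\le r<M_{R+1}$, and set $g:=f-S_{M_{L},M_{R}}(f)$. Because $S_{M_{L},M_{R}}(f)$ has $x$-degree $<M_{L}$ and $y$-degree $<M_{R}$, the factor $(S_{l}^{(1)}-I)(S_{r}^{(2)}-I)$ annihilates it, so that $(S_{l}^{(1)}-I)(S_{r}^{(2)}-I)f=(S_{l}^{(1)}-I)(S_{r}^{(2)}-I)g=:u_{lr}$, where $\Vert g\Vert_{C}\le 2E_{M_{L},M_{R}}(f)$ by (\ref{mixBA}). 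Expanding $e^{3A|u_{lr}|^{1/2}}-1=\sum_{k\ge1}\frac{(3A)^{k}}{k!}|u_{lr}|^{k/2}$ and applying the Cauchy--Schwarz inequality over the block $B_{LR}$ halves the exponent,
\[
\sum_{(l,r)\in B_{LR}}|u_{lr}|^{k/2}\le|B_{LR}|^{1/2}\Big(\sum_{(l,r)\in B_{LR}}|u_{lr}|^{k}\Big)^{1/2}.
\]

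The main obstacle is the two-dimensional strong $L^{k}$ estimate
\[
\frac{1}{M_{L}M_{R}}\sum_{(l,r)\in B_{LR}}\big|(S_{l}^{(1)}-I)(S_{r}^{(2)}-I)g(x,y)\big|^{k}\le(Ck)^{2k}\,\Vert g\Vert_{C}^{k},
\]
uniform in $x,y,L,R$ with $C$ depending only on $\sup_{j}m_{j}$. This is where the harmonic analysis of the Vilenkin--Dirichlet kernels (\ref{dir})--(\ref{dir2}) is needed: I would first establish the one-dimensional estimate $\frac{1}{M_{L}}\sum_{l\in B_{L}}|(S_{l}-I)\varphi|^{k}\le(Ck)^{k}\Vert\varphi\Vert_{C}^{k}$ with the sharp growth $(Ck)^{k}$ -- the mechanism behind the exponent $1$ in one dimension -- and then iterate it in the two variables. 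Granting it, the Cauchy--Schwarz bound gives $\sum_{(l,r)\in B_{LR}}|u_{lr}|^{k/2}\le M_{L}M_{R}\,(C'k)^{k}E_{M_{L},M_{R}}(f)^{k/2}$; summing the series and using $k!\ge(k/e)^{k}$ turns it into a geometric series in $3AC'e\sqrt{E_{M_{L},M_{R}}(f)}$, whose sum is $\lesssim\sqrt{E_{M_{L},M_{R}}(f)}$ once the best approximation is small (the finitely many remaining blocks are absorbed into $c(f,A)$). The decisive point is that two oscillation factors produce $(Ck)^{2k}$, which after the Cauchy--Schwarz halving becomes $(Ck)^{k}$; this is exactly what forces the exponent $1/2$ and makes the result best possible.

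It remains to assemble the blocks. Invoking (\ref{PBA}) to write $\sqrt{E_{M_{L},M_{R}}(f)}\le\sqrt{2}\big(\sqrt{E_{M_{L}}^{(1)}(f)}+\sqrt{E_{M_{R}}^{(2)}(f)}\big)$, summing over the blocks contained in $[1,n]\times[1,m]$ with their weights $|B_{LR}|\asymp M_{L}M_{R}$, and using $\sum_{M_{R}\le m}M_{R}\asymp m$ and $\sum_{M_{L}\le n}M_{L}\asymp n$ (boundedness of the Vilenkin group), the mixed contribution is bounded by $\frac{c}{n}\sum_{L}M_{L}\sqrt{E_{M_{L}}^{(1)}(f)}+\frac{c}{m}\sum_{R}M_{R}\sqrt{E_{M_{R}}^{(2)}(f)}$. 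Finally, since $E_{l}^{(1)}(f)$ is nonincreasing in $l$ and the block lengths are comparable to $M_{L}$, one has $\frac{1}{n}\sum_{L}M_{L}\sqrt{E_{M_{L}}^{(1)}(f)}\le\frac{c}{n}\sum_{l=1}^{n}\sqrt{E_{l}^{(1)}(f)}$, and similarly in the second variable, which yields the asserted inequality.
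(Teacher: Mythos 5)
Your overall architecture --- Taylor expansion of the exponential, decomposition into blocks $M_L\le l<M_{L+1}$, $M_R\le r<M_{R+1}$, a strong $L^{k}$ estimate with growth $(Ck)^{k}$ in the $k$-th Taylor term, and a geometric series in $\sqrt{E_{M_L,M_R}(f)}$ --- is the same as the paper's, which channels everything through Lemma \ref{BAest}. But the step you yourself flag as the main obstacle is genuinely gapped. The two-dimensional strong $L^{k}$ bound cannot be obtained by proving $\frac{1}{M_L}\sum_{l}|(S_l-I)\varphi|^{k}\le (Ck)^{k}\Vert\varphi\Vert_C^{k}$ in one variable and then ``iterating it in the two variables'': applying the one-dimensional estimate in $r$ for fixed $l$ and $x$ leaves $\sup_{y}|(S_l^{(1)}-I)g(x,y)|^{k}$ inside the average over $l$, and the required maximal inequality $\frac{1}{M_L}\sum_{l}\sup_{y}|(S_l^{(1)}-I)g(x,y)|^{k}\le (Ck)^{k}\Vert g\Vert_C^{k}$ does not follow from the pointwise one (the supremum may be attained at a different $y$ for each $l$). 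The paper's route is Glukhov's multilinear lemma (Lemma 1): for $p=2^{m}$ one writes $|S_{nl}(g;x,y)|^{p}$ as a $2p$-fold integral against $\prod_{i=1}^{p}D_n(s_i)\prod_{j=1}^{p}D_l(t_j)$, the block sum over $(n,l)$ then factorizes at the kernel level, and each factor is controlled by $\int_{G_m^{p}}\frac{1}{M_A}\bigl|\sum_{n}\prod_{i}D_n(s_i)\bigr|\,d\mathbf{\mu}\le (cp)^{p}$. That $p$-variable kernel estimate is the real content of Lemmas 1 and \ref{mainlemma}; it is not a consequence of (\ref{dir})--(\ref{dir2}) plus operator iteration, and your sketch never supplies it.

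There is a second gap at the end. Your geometric series $\sum_{k}(3AC'e\sqrt{E_{M_L,M_R}(f)})^{k}$ converges only when $E_{M_L,M_R}(f)<(3AC'e)^{-2}$, which by (\ref{PBA}) requires \emph{both} $L$ and $R$ to exceed a threshold $N_0(f,A)$. The exceptional blocks are therefore not ``finitely many'': every block with $L\le N_0$ and $R$ arbitrary (and symmetrically) is exceptional, there are unboundedly many of them as $n,m\to\infty$, and on each of them your series in $k$ diverges outright once $A$ is large, so nothing can simply be ``absorbed into $c(f,A)$''. These strips need a separate argument (for instance, for $l$ in a fixed finite range one can use the uniform boundedness of $S_l^{(1)}$ on $C$ and reduce to the one-dimensional statement for $S_l^{(1)}f$); this is exactly the step that forces the constant to depend on $f$, and it is missing from the proposal.
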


We say that the function $\psi $ belongs to the class $\Psi $ if it increase
on $[0,+\infty )$ and 
\begin{equation*}
\lim\limits_{u\rightarrow 0}\psi \left( u\right) =\psi \left( 0\right) =0.
\end{equation*}

\begin{theorem}
\label{strongconv}a)Let $\varphi \in \Psi $ and let the inequality 
\begin{equation}
\overline{\lim\limits_{u\rightarrow \infty }}\frac{\varphi \left( u\right) }{%
\sqrt{u}}<\infty  \label{sufcond}
\end{equation}%
hold. Then for any function $f\in C\left( G_{m}^{2}\right) $ the equality 
\begin{equation*}
\lim\limits_{n,m\rightarrow \infty }\left\Vert \frac{1}{nm}%
\sum\limits_{l=1}^{n}\sum\limits_{r=1}^{m}\left( e^{\varphi \left(
\left\vert S_{lr}\left( f\right) -f\right\vert \right) }-1\right)
\right\Vert _{C}=0;
\end{equation*}%
is satisfied.
\end{theorem}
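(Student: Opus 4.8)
The plan is to deduce the statement from Theorem~\ref{estBA} by first passing from the specific gauge $u\mapsto e^{Au^{1/2}}-1$ to the arbitrary $\varphi$, and then exploiting that the right-hand side of Theorem~\ref{estBA} is a Ces\`aro mean of a null sequence. First I would record the following consequence of Theorem~\ref{estBA}: for every fixed $A>0$,
\begin{equation*}
\lim_{n,m\to\infty}\left\Vert \frac{1}{nm}\sum_{l=1}^{n}\sum_{r=1}^{m}\left(e^{A\left\vert S_{lr}(f)-f\right\vert^{1/2}}-1\right)\right\Vert_{C}=0.
\end{equation*}
Indeed, since $f\in C(G_m^2)$ one has $E_l^{(1)}(f)\to0$ and $E_r^{(2)}(f)\to0$ as $l,r\to\infty$ (e.g.\ because $S_{M_L}^{(1)}(f)\to f$ and $S_{M_R}^{(2)}(f)\to f$ uniformly, so $E_{M_L}^{(1)}(f)\le\Vert f-S_{M_L}^{(1)}(f)\Vert_C\to0$ and $E_l^{(1)}(f)$ is non-increasing), hence $\sqrt{E_l^{(1)}(f)}\to0$ and $\sqrt{E_r^{(2)}(f)}\to0$; the arithmetic means $\frac1n\sum_{l=1}^n\sqrt{E_l^{(1)}(f)}$ and $\frac1m\sum_{r=1}^m\sqrt{E_r^{(2)}(f)}$ of these null sequences therefore tend to $0$, and with them the right-hand side of Theorem~\ref{estBA} for each fixed $A$.

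The decisive step is a pointwise comparison of gauges. I claim that for every $\varepsilon>0$ there exist $A>0$ and $C<\infty$ (depending only on $\varepsilon$ and $\varphi$) such that
\begin{equation*}
e^{\varphi(t)}-1\le\varepsilon+C\left(e^{A\sqrt{t}}-1\right)\qquad(t\ge0).
\end{equation*}
To produce it I would use the two hypotheses on $\varphi$ separately. Since $\varphi\in\Psi$, i.e.\ $\varphi$ is increasing with $\varphi(0+)=0$, I can fix $\eta>0$ with $e^{\varphi(\eta)}-1\le\varepsilon$; then monotonicity gives $e^{\varphi(t)}-1\le\varepsilon$ for all $0\le t\le\eta$, which the first term absorbs. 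By \eqref{sufcond} there are $A_0>0$ and $u_0>0$ with $\varphi(t)\le A_0\sqrt t$ for $t\ge u_0$, so choosing any $A>A_0$ makes the ratio $\bigl(e^{\varphi(t)}-1\bigr)/\bigl(e^{A\sqrt t}-1\bigr)$ bounded on $[u_0,\infty)$ (there it is $\le(e^{A_0\sqrt t}-1)/(e^{A\sqrt t}-1)\to0$), while on the bounded range $\eta\le t\le u_0$ the numerator is at most $e^{\varphi(u_0)}-1$ and the denominator is bounded below by $e^{A\sqrt\eta}-1>0$; taking $C$ to be the resulting supremum over $t\ge\eta$ yields the claim.

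Combining the claim, applied with $t=\left\vert S_{lr}(f;x,y)-f(x,y)\right\vert$, and averaging gives
\begin{equation*}
\frac{1}{nm}\sum_{l=1}^{n}\sum_{r=1}^{m}\left(e^{\varphi(\left\vert S_{lr}(f)-f\right\vert)}-1\right)\le\varepsilon+C\cdot\frac{1}{nm}\sum_{l=1}^{n}\sum_{r=1}^{m}\left(e^{A\left\vert S_{lr}(f)-f\right\vert^{1/2}}-1\right),
\end{equation*}
so that, taking the $C$-norm and letting $n,m\to\infty$ with $A$ and $C$ fixed, the limit recorded above forces $\limsup_{n,m\to\infty}\Vert\,\cdots\,\Vert_C\le\varepsilon$. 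Since $\varepsilon>0$ is arbitrary the limit is $0$, which is the assertion.

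The main obstacle is precisely the uniform-in-$t$ comparison of gauges: one must simultaneously control $e^{\varphi(t)}-1$ near $t=0$, where only $\varphi(0+)=0$ is available and $\varphi(t)/\sqrt t$ may be unbounded, and for large $t$, where the growth restriction \eqref{sufcond} is exactly what permits domination by $e^{A\sqrt t}-1$. Matching the two different hypotheses to the two regimes and patching them across the compact middle interval is the crux; once $(\star)$ is secured, everything else reduces to Theorem~\ref{estBA} together with the elementary fact that Ces\`aro means of null sequences vanish.
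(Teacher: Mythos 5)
Your argument is correct and follows essentially the same route as the paper: reduce to the gauge $e^{A\sqrt{t}}-1$, and then invoke Theorem \ref{estBA} together with the observation that the Ces\`aro means of the null sequences $\sqrt{E_{l}^{(1)}(f)}$ and $\sqrt{E_{r}^{(2)}(f)}$ tend to zero. The only difference is that where the paper delegates the passage from $e^{A\sqrt{t}}-1$ to $e^{\varphi(t)}-1$ to Gogoladze's comparison lemma (Lemma \ref{gogoladze}), you supply an inline proof of the needed instance via the pointwise domination $e^{\varphi(t)}-1\le\varepsilon+C\left(e^{A\sqrt{t}}-1\right)$, which is a correct and self-contained substitute.
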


b) For any function $\varphi \in \Psi $ satisfying the condition 
\begin{equation}
\overline{\lim\limits_{u\rightarrow \infty }}\frac{\varphi \left( u\right) }{%
\sqrt{u}}=\infty  \label{ineq}
\end{equation}%
there exists a function $F\in C\left( G_{m}^{2}\right) $ such that 
\begin{equation*}
\overline{\lim\limits_{m\rightarrow \infty }}\frac{1}{m^{2}}%
\sum\limits_{l=1}^{m}\sum\limits_{r=1}^{m}e^{\varphi \left( \left\vert
S_{lr}\left( F;0,0\right) -F\left( 0,0\right) \right\vert \right) }=+\infty .
\end{equation*}

\section{Auxiliary Results}

\begin{lemma}
(Glukhov \cite{Gl}) Let $p\in \mathbb{N}_{+}$. Then%
\begin{equation*}
\sup\limits_{n}\left( \int\limits_{G_{m}^{p}}\frac{1}{M_{n}}\left\vert
\sum\limits_{l=M_{n}}^{M_{n+1}-1}\prod\limits_{k=1}^{p}D_{l}\left(
s_{k}\right) \right\vert d\mathbf{\mu }\left( s_{1},...,s_{p}\right) \right)
^{1/p}\leq cp.
\end{equation*}
\end{lemma}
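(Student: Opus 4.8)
The plan is to use representation (\ref{dir2}) to turn the sum over the block $[M_{n},M_{n+1})$ into an expression whose $L^{1}(G_{m}^{p})$-integral decouples coordinatewise, reducing everything to elementary one-coordinate sums. First I would rewrite, for $M_{n}\le l<M_{n+1}$, $D_{l}(s)=\psi _{l}(s)\sum_{j=0}^{n}D_{M_{j}}(s)\,\rho _{j}^{(l)}(s)$ with $\rho _{j}^{(l)}(s):=\sum_{q=m_{j}-l_{j}}^{m_{j}-1}r_{j}^{q}(s)$, where $l_{j}$ is the $j$th digit of $l$. Forming the product over $k$ and using $\prod_{k}\psi _{l}(s_{k})=\psi _{l}(\sigma )$ with $\sigma :=s_{1}+\cdots +s_{p}$, I would expand the product of the $j$-sums into a sum over level assignments $\vec{\jmath}=(j_{1},\dots ,j_{p})\in \{0,\dots ,n\}^{p}$:
\[
\sum_{l=M_{n}}^{M_{n+1}-1}\prod_{k=1}^{p}D_{l}(s_{k})=\sum_{\vec{\jmath}}\Big(\prod_{i=0}^{n}P_{i}\Big)\prod_{k=1}^{p}D_{M_{j_{k}}}(s_{k}).
\]

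The key structural observation is that, since $\psi _{l}(\sigma )=\prod_{i}r_{i}^{l_{i}}(\sigma )$ while $\rho _{j_{k}}^{(l)}$ depends on $l$ only through the single digit $l_{j_{k}}$, and since $l$ ranges over the full digit box ($l_{n}\in \{1,\dots ,m_{n}-1\}$ and $l_{i}\in \{0,\dots ,m_{i}-1\}$ for $i<n$), the $l$-summation factors over digits, producing
\[
P_{i}=\sum_{l_{i}\in R_{i}}r_{i}^{l_{i}}(\sigma )\prod_{k:\,j_{k}=i}\Big(\sum_{q=m_{i}-l_{i}}^{m_{i}-1}r_{i}^{q}(s_{k})\Big),
\]
with $R_{n}=\{1,\dots ,m_{n}-1\}$ and $R_{i}=\{0,\dots ,m_{i}-1\}$ for $i<n$. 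Each $P_{i}$ depends only on the $i$th coordinates $(s_{1})_{i},\dots ,(s_{p})_{i}$, whereas $\prod_{k}D_{M_{j_{k}}}(s_{k})=\prod_{k}M_{j_{k}}\mathbf{1}_{I_{j_{k}}}(s_{k})$ only constrains the coordinates below $j_{k}$.

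Estimating the modulus by the triangle inequality over $\vec{\jmath}$, the last observation lets the integral of each $\vec{\jmath}$-term factorize across coordinates as $\prod_{k}M_{j_{k}}\cdot \prod_{i=0}^{n}Q_{i}$, where $Q_{i}$ is a one-coordinate integral of $|P_{i}|$ against the indicators $\mathbf{1}_{\{(s_{k})_{i}=0\}}$ over $k$ with $j_{k}>i$. Writing $B_{i}=\#\{k:j_{k}=i\}$ and $G_{i}=\#\{k:j_{k}>i\}$, integrating out the $G_{i}$ forced-zero coordinates, and setting $C_{i}:=Q_{i}m_{i}^{G_{i}}$, I would estimate as follows. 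For a level carrying indices ($B_{i}\ge 1$) the crude bound $|P_{i}|\le m_{i}^{B_{i}+1}$ gives $C_{i}\le \bar{m}^{B_{i}+1}$ ($\bar{m}:=\sup _{k}m_{k}$); there are at most $p$ such levels and $\sum _{i}B_{i}=p$, so their product is $\le \bar{m}^{2p}$. For an empty level ($B_{i}=0$) lying above some assigned index, the orthogonality relation $\sum _{l_{i}}r_{i}^{l_{i}}(\sigma )=m_{i}\mathbf{1}_{\{\sigma _{i}=0\}}$ together with one free lower coordinate forces $C_{i}=1$ exactly (the top level gives $\le 2$), while the lowest levels $i<\min _{k}j_{k}$ give $C_{i}=m_{i}$. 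Since the factors $m_{i}^{\pm G_{i}}$ cancel against $\prod _{k}M_{j_{k}}$, multiplying the $C_{i}$ shows each $\vec{\jmath}$-integral is at most $2\bar{m}^{2p}M_{\min _{k}j_{k}}$.

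It would then remain to sum over $\vec{\jmath}$: grouping by $d=n-\min _{k}j_{k}$ and using $M_{n-d}/M_{n}\le 2^{-d}$, one gets
\[
\frac{1}{M_{n}}\sum_{\vec{\jmath}}M_{\min _{k}j_{k}}\le \sum_{d\ge 0}2^{-d}\big[(d+1)^{p}-d^{p}\big]\le p\sum_{d\ge 0}2^{-d}(d+1)^{p-1}\le (cp)^{p},
\]
uniformly in $n$; multiplying by $2\bar{m}^{2p}$ and taking $p$th roots gives the assertion. The main obstacle is the pair of facts underlying the third paragraph: that every digit-sum $P_{i}$ lives on a single coordinate, so that the full $L^{1}(G_{m}^{p})$-integral decouples, and that the only genuine cancellation needed is the elementary orthogonality at the empty levels, which pins those factors to $1$ and thereby keeps the product over the $\sim n$ empty levels bounded as $n\to \infty $. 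Granting these, the precise rate $cp$ is produced by the sharp elementary sum displayed above.
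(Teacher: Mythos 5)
The paper does not actually prove this lemma: it is quoted from Glukhov \cite{Gl} and used as a black box, so there is no in-paper argument to compare against. Your sketch is, as far as I can check, a correct and complete route to the statement, and in spirit it reconstructs Glukhov's original method: expand each $D_{l}$ by (\ref{dir2}), collapse $\prod_{k}\psi _{l}(s_{k})$ to $\psi _{l}(s_{1}+\cdots +s_{p})$, factor the $l$-sum over the digit box level by level, and observe that both the resulting factors $P_{i}$ and the indicators coming from $D_{M_{j_{k}}}$ respect the level structure, so the $L^{1}(G_{m}^{p})$-integral of each $\vec{\jmath}$-term splits into one-level integrals. The two points that carry the proof are exactly the ones you isolate: the orthogonality $\sum_{l_{i}}r_{i}^{l_{i}}(\sigma )=m_{i}\mathbf{1}_{\{\sigma _{i}=0\}}$, together with the presence of at least one free coordinate, pins $C_{i}=1$ at the roughly $n$ empty levels (this is precisely what makes the bound uniform in $n$), and the telescoping count $(d+1)^{p}-d^{p}\leq p(d+1)^{p-1}$ played against the geometric decay $M_{n-d}/M_{n}\leq 2^{-d}$ gives the $(cp)^{p}$ rate, hence $cp$ after the $p$th root. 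Two small caveats to record when you write this out in full: your final constant is $2^{1/p}\bar{m}^{2}cp$ with $\bar{m}=\sup_{k}m_{k}$, so the lemma in this form requires the Vilenkin group to be bounded (the paper assumes this implicitly; compare $a=\sup_{j}m_{j}$ in the proof of Theorem \ref{strongconv}); and at the top level $i=n$ the digit ranges over $\{1,\dots ,m_{n}-1\}$ rather than all of $Z_{m_{n}}$, which is why the empty-level factor there is only $\leq 2$ rather than $1$ --- you noted this, but it must not be dropped, since it is the one place the exact orthogonality fails.
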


\begin{lemma}
\label{gogoladze}(Gogoladze \cite{Go}) Let $\varphi ,\psi \in \Psi $ and the
equality 
\begin{equation*}
\lim\limits_{n,m\rightarrow \infty }\frac{1}{nm}\sum\limits_{l=1}^{n}\sum%
\limits_{r=1}^{m}\psi \left( \left\vert S_{lr}\left( f;x,y\right) -f\left(
x,y\right) \right\vert \right) =0
\end{equation*}%
be satisfied at the point $\left( x_{0},y_{0}\right) $ or uniformly on a set 
$E\subset I^{2}$. If 
\begin{equation*}
\overline{\lim\limits_{u\rightarrow \infty }}\frac{\varphi \left( u\right) }{%
\psi \left( u\right) }<\infty ,
\end{equation*}%
then the equality 
\begin{equation*}
\lim\limits_{n,m\rightarrow \infty }\frac{1}{nm}\sum\limits_{l=1}^{n}\sum%
\limits_{r=1}^{m}\varphi \left( \left\vert S_{lr}\left( f;x,y\right)
-f\left( x,y\right) \right\vert \right) =0
\end{equation*}%
is satisfied at the point $\left( x_{0},y_{0}\right) $ or uniformly on a set 
$E\subset I^{2}$.
\end{lemma}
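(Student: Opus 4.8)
The plan is to derive the $\varphi$-average estimate from the given $\psi$-average estimate by a truncation of the index set according to the size of the deviation. Throughout write $a_{lr}:=\left\vert S_{lr}\left( f;x,y\right) -f\left( x,y\right) \right\vert $ and set
\[ \sigma _{nm}:=\frac{1}{nm}\sum_{l=1}^{n}\sum_{r=1}^{m}\psi \left( a_{lr}\right) , \]
so that by hypothesis $\sigma _{nm}\rightarrow 0$ as $n,m\rightarrow \infty $, either at $\left( x_{0},y_{0}\right) $ or uniformly on $E$; the aim is to show that the same average with $\psi $ replaced by $\varphi $ tends to $0$.

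First I would fix $\varepsilon >0$ and use the assumption $\overline{\lim }_{u\rightarrow \infty }\varphi \left( u\right) /\psi \left( u\right) <\infty $ to obtain constants $K>0$ and $u_{0}>0$ with $\varphi \left( u\right) \leq K\psi \left( u\right) $ for all $u\geq u_{0}$. Since $\varphi \in \Psi $, so that $\varphi \left( u\right) \rightarrow 0$ as $u\rightarrow 0+$, I would then pick $\delta \in \left( 0,u_{0}\right] $ with $\varphi \left( \delta \right) <\varepsilon /2$; because $\psi $ is increasing with $\psi \left( 0\right) =0$ we have $\psi \left( \delta \right) >0$. The key step is to extend the comparison to the whole half-line $\left[ \delta ,\infty \right) $: for $\delta \leq u\leq u_{0}$ monotonicity gives $\varphi \left( u\right) \leq \varphi \left( u_{0}\right) =\frac{\varphi \left( u_{0}\right) }{\psi \left( \delta \right) }\psi \left( \delta \right) \leq \frac{\varphi \left( u_{0}\right) }{\psi \left( \delta \right) }\psi \left( u\right) $, so that with $K^{\prime }:=\max \left\{ K,\varphi \left( u_{0}\right) /\psi \left( \delta \right) \right\} $ one has $\varphi \left( u\right) \leq K^{\prime }\psi \left( u\right) $ for every $u\geq \delta $.

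With this comparison at hand I would split the double sum into the indices with $a_{lr}<\delta $ and those with $a_{lr}\geq \delta $. On the first set $\varphi \left( a_{lr}\right) \leq \varphi \left( \delta \right) <\varepsilon /2$ by monotonicity, so this portion contributes at most $\varepsilon /2$ to the average. On the second set $\varphi \left( a_{lr}\right) \leq K^{\prime }\psi \left( a_{lr}\right) $, so its contribution is at most $K^{\prime }\sigma _{nm}$. Therefore
\[ \frac{1}{nm}\sum_{l=1}^{n}\sum_{r=1}^{m}\varphi \left( a_{lr}\right) \leq \frac{\varepsilon }{2}+K^{\prime }\sigma _{nm}, \]
and since $\sigma _{nm}\rightarrow 0$ the right-hand side is smaller than $\varepsilon $ for all sufficiently large $n,m$. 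As $\varepsilon >0$ was arbitrary, the $\varphi $-average tends to $0$. Because $K^{\prime }$ and $\delta $ depend only on $\varphi ,\psi ,\varepsilon $ and not on $\left( x,y\right) $, the bound is uniform as soon as $\sigma _{nm}\rightarrow 0$ uniformly on $E$, which delivers both the pointwise and the uniform versions simultaneously.

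The only genuine subtlety I anticipate is the middle range $\left[ \delta ,u_{0}\right) $, on which the hypothesis provides no direct inequality between $\varphi $ and $\psi $; the device of passing to the larger constant $K^{\prime }$, which is finite precisely because $\psi \left( \delta \right) >0$ and both functions are monotone, is what closes this gap. It is exactly the positivity $\psi \left( \delta \right) >0$ that makes the statement correct, since otherwise deviations concentrated where $\psi $ vanishes but $\varphi $ does not would break the implication. Everything else is routine, and the uniformity requires no extra work once the constants are seen to be independent of the base point.
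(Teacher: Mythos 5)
The paper does not prove this lemma at all: it is quoted from Gogoladze \cite{Go} as an auxiliary result, so there is no in-paper argument to compare yours against. Your truncation argument is the standard (and essentially the only) way to prove such comparison lemmas, and it is correct: the splitting at level $\delta$, the extension of the inequality $\varphi\le K\psi$ from $[u_0,\infty)$ down to $[\delta,\infty)$ via the constant $K'=\max\{K,\varphi(u_0)/\psi(\delta)\}$, and the observation that all constants are independent of the base point (so the uniform version comes for free) are exactly what is needed. The one step to be careful about is your assertion that $\psi(\delta)>0$ follows from ``$\psi$ is increasing with $\psi(0)=0$'': if ``increase'' in the definition of $\Psi$ is read as non-strict, a function such as $\psi(u)=\max(0,u-1)$ belongs to $\Psi$ and vanishes on $[0,1]$, in which case both your proof and the lemma itself (as a statement about arbitrary deviations) break down, for precisely the reason you identify in your closing remark. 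So the positivity of $\psi$ on $(0,\infty)$ must be taken as implicit in the class $\Psi$ (equivalently, ``increasing'' must be strict); granting that, your proof is complete.
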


\begin{lemma}
\label{mainlemma}Let $p>0$, $A, B \in \mathbb{N}.$ Then 
\begin{equation}
\left\{ \frac{1}{M_{A}M_{B}}\sum\limits_{n=M_{A}}^{M_{A+1}-1}\sum%
\limits_{l=M_{B}}^{M_{B+1}-1}\left\vert S_{nl}\left( f;x,y\right)
\right\vert ^{p}\right\} ^{1/p}\leq c\left\Vert f\right\Vert _{C}\left(
p+1\right) ^{2}.  \label{mainineq}
\end{equation}
\end{lemma}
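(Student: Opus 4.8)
The plan is to reduce to the case of an even integer exponent and then to linearise the $p$-th power by expanding it as an iterated integral, so that Glukhov's Lemma can be applied separately in the two groups of variables. As a first step I would reduce to $p=2q$ with $q\in\mathbb{N}_{+}$. Since for a fixed block the number of summation indices $M_{A+1}-M_{A}=(m_{A}-1)M_{A}$ is comparable to $M_{A}$ (for a bounded Vilenkin group), the averages $\frac{1}{M_{A}M_{B}}\sum$ and the genuine arithmetic means over the blocks differ only by bounded factors, which may be absorbed into $c$. Using the monotonicity of $\ell^{p}$-norms (the power-mean inequality) one then compares the exponent $p$ with the least even integer $2q\ge p$; since $2q\le p+2$, a bound of the form $c\|f\|_{C}(2q+1)^{2}$ for the even case immediately yields $c\|f\|_{C}(p+1)^{2}$ in general.

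For the main step, I would write $S_{nl}(f;x,y)=\int_{G_{m}\times G_{m}}f(u,v)D_{n}(x-u)D_{l}(y-v)\,d\mathbf{\mu}(u,v)$ and use $|S_{nl}|^{2q}=S_{nl}^{q}\,\overline{S_{nl}}^{\,q}$ together with $\overline{D_{n}(x-u)}=D_{n}(u-x)$. Expanding, $\left\vert S_{nl}(f;x,y)\right\vert ^{2q}$ becomes a $2q$-fold integral in the variables $(u_{k},v_{k})_{k=1}^{2q}$ whose integrand is a product of $2q$ factors $f$ (or $\overline{f}$) times $\prod_{k=1}^{2q}D_{n}(w_{k})\prod_{k=1}^{2q}D_{l}(z_{k})$, where each $w_{k}$ equals $x-u_{k}$ or $u_{k}-x$ and each $z_{k}$ equals $y-v_{k}$ or $v_{k}-y$. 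The crucial point is that the factors carrying the index $n$ depend only on $x$ and the $u$-variables, while those carrying $l$ depend only on $y$ and the $v$-variables. Hence, after bounding $|f|\le\|f\|_{C}$ and summing over $n$ and $l$, the double sum factorises (by Fubini, the product measure being a product) into a product of an $x$-integral and a $y$-integral.

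By the translation invariance of the Haar measure, replacing each $u_{k}$ by $w_{k}$ turns the $x$-factor into
\begin{equation*}
\frac{1}{M_{A}}\int_{G_{m}^{2q}}\left\vert \sum\limits_{n=M_{A}}^{M_{A+1}-1}\prod\limits_{k=1}^{2q}D_{n}(w_{k})\right\vert d\mathbf{\mu}(w_{1},\dots,w_{2q}),
\end{equation*}
which is precisely the expression estimated in Lemma 1 with parameter $2q$; it is therefore at most $(c\,2q)^{2q}$, and the same bound holds for the $y$-factor. Multiplying the two estimates gives $\frac{1}{M_{A}M_{B}}\sum_{n,l}\left\vert S_{nl}\right\vert ^{2q}\le\|f\|_{C}^{2q}(c\,2q)^{4q}$, and taking the $(2q)$-th root yields the desired $c\|f\|_{C}(2q)^{2}\le c\|f\|_{C}(p+1)^{2}$.

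I expect the main obstacle to be the bookkeeping in the expansion step: one must verify that after taking the $2q$-th power every conjugated Dirichlet factor is reflected into an ordinary $D_{n}$ (respectively $D_{l}$) evaluated at a shifted argument, so that the two resulting integrals have exactly the form required by Glukhov's Lemma irrespective of the sign pattern of the arguments, and one must check that the reduction to an even exponent preserves the quadratic dependence $(p+1)^{2}$ rather than producing, say, a factor exponential in $p$ (the square arising here precisely from the two independent dimensions, each contributing a factor of order $q$).
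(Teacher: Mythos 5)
Your proposal is correct and follows essentially the same route as the paper: reduce to an even integer exponent by power-mean monotonicity (the paper takes $p=2^{m}$, you take $p=2q$, an immaterial difference), expand $|S_{nl}|^{2q}=S_{nl}^{q}\overline{S_{nl}}^{\,q}$ as a $2q$-fold integral, bound $f$ by $\Vert f\Vert _{C}$, factor the double sum into an $x$-block and a $y$-block, and apply Glukhov's lemma to each, obtaining the product $(cp)(cp)=cp^{2}$. Your explicit handling of the normalization $\tfrac{1}{M_{A}M_{B}}$ versus the true block means, and of the reflection $\overline{D_{n}(x-u)}=D_{n}(u-x)$, only makes precise steps the paper passes over silently.
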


\begin{proof}[Proof of Lemma \ref{mainlemma}]
Since%
\begin{eqnarray*}
&&\left\{ \frac{1}{M_{A}M_{B}}\sum\limits_{n=M_{A}}^{M_{A+1}-1}\sum%
\limits_{l=M_{B}}^{M_{B+1}-1}\left\vert S_{nl}\left( f;x,y\right)
\right\vert ^{p}\right\} ^{1/p} \\
&\leq &\left\{ \frac{1}{M_{A}M_{B}}\sum\limits_{n=M_{A}}^{M_{A+1}-1}\sum%
\limits_{l=M_{B}}^{M_{B+1}-1}\left\vert S_{nl}\left( f;x,y\right)
\right\vert ^{p+1}\right\} ^{1/\left( p+1\right) }
\end{eqnarray*}%
without lost of generality we can suppose that $p=2^{m},m\in \mathbb{N_+}$. We can
write%
\begin{equation*}
\left\vert S_{nl}\left( f;x,y\right) \right\vert ^{2}=S_{nl}\left(
f;x,y\right) \overline{S}_{nl}\left( f;x,y\right) 
\end{equation*}%
\begin{equation*}
=\int\limits_{G_{m}^{2}}f\left( x-s_{1},y-t_{1}\right) D_{n}\left(
s_{1}\right) D_{l}\left( t_{1}\right) d\mathbf{\mu }\left(
s_{1},t_{1}\right) 
\end{equation*}%
\begin{equation*}
\times \int\limits_{G_{m}^{2}}\overline{f}\left( x-s_{2},y-t_{2}\right) 
\overline{D}_{n}\left( s_{2}\right) \overline{D}_{l}\left( t_{2}\right) d%
\mathbf{\mu }\left( s_{2},t_{2}\right) 
\end{equation*}%
\begin{equation*}
=\int\limits_{G_{m}^{2}}f\left( x-s_{1},y-t_{1}\right) D_{n}\left(
s_{1}\right) D_{l}\left( t_{1}\right) d\mathbf{\mu }\left(
s_{1},t_{1}\right) 
\end{equation*}%
\begin{equation*}
\times \int\limits_{G_{m}^{2}}\overline{f}\left( x+s_{2},y+t_{2}\right)
D_{n}\left( s_{2}\right) D_{l}\left( t_{2}\right) d\mathbf{\mu }\left(
s_{2},t_{2}\right) 
\end{equation*}%
\begin{equation*}
=\int\limits_{G_{m}^{4}}f\left( x-s_{1},y-t_{1}\right) \overline{f}\left(
x+s_{2},y+t_{2}\right) 
\end{equation*}%
\begin{equation*}
\times D_{n}\left( s_{1}\right) D_{n}\left( s_{2}\right) D_{l}\left(
t_{1}\right) D_{l}\left( t_{2}\right) d\mathbf{\mu }\left(
s_{1},t_{1},s_{2},t_{2}\right) .
\end{equation*}%
Hence, we get%
\begin{equation*}
\left\vert S_{nl}\left( f;x,y\right) \right\vert ^{p}=\left( \left\vert
S_{nl}\left( f;x,y\right) \right\vert ^{2}\right) ^{p/2}
\end{equation*}%
\begin{equation*}
=\left( \int\limits_{G_{m}^{4}}f\left( x-s_{1},y-t_{1}\right) \overline{f}%
\left( x+s_{2},y+t_{2}\right) \right. 
\end{equation*}%
\begin{equation*}
\left. \times D_{n}\left( s_{1}\right) D_{n}\left( s_{2}\right) D_{l}\left(
t_{1}\right) D_{l}\left( t_{2}\right) d\mathbf{\mu }\left(
s_{1},t_{1},s_{2},t_{2}\right) \right) ^{p/2}.
\end{equation*}%
\begin{equation*}
=\int\limits_{G_{m}^{2p}}\prod\limits_{k=1}^{p/2}f\left(
x-s_{2k-1},y-t_{2k-1}\right) \prod\limits_{r=1}^{p/2}\overline{f}\left(
x+s_{2r},y+t_{2r}\right) 
\end{equation*}%
\begin{equation*}
\times \prod\limits_{i=1}^{p}D_{n}\left( s_{i}\right)
\prod\limits_{j=1}^{p}D_{l}\left( t_{j}\right) d\mathbf{\mu }\left(
s_{1},t_{1},...,s_{p},t_{p}\right) ,
\end{equation*}%
\begin{equation*}
\left\{ \frac{1}{M_{A}M_{B}}\sum\limits_{n=M_{A}}^{M_{A+1}-1}\sum%
\limits_{l=M_{B}}^{M_{B+1}-1}\left\vert S_{nl}\left( f;x,y\right)
\right\vert ^{p}\right\} ^{1/p}
\end{equation*}%
\begin{equation*}
\leq \left( \int\limits_{G_{m}^{2p}}\prod\limits_{k=1}^{p/2}\left\vert
f\left( x-s_{2k-1},y-t_{2k-1}\right) \right\vert
\prod\limits_{r=1}^{p/2}\left\vert \overline{f}\left(
x+s_{2r},y+t_{2r}\right) \right\vert \right. 
\end{equation*}%
\begin{equation*}
\left. \times \frac{1}{M_{A}M_{B}}\left\vert
\sum\limits_{n=M_{A}}^{M_{A+1}-1}\sum\limits_{l=M_{B}}^{M_{B+1}-1}\prod%
\limits_{i=1}^{p}D_{n}\left( s_{i}\right) \prod\limits_{j=1}^{p}D_{l}\left(
t_{j}\right) \right\vert d\mathbf{\mu }\left(
s_{1},t_{1},...,s_{p},t_{p}\right) \right) ^{1/p}
\end{equation*}%
\begin{equation*}
\leq \left\Vert f\right\Vert _{C}\left( \int\limits_{G_{m}^{p}}\frac{1}{M_{A}%
}\left\vert
\sum\limits_{n=M_{A}}^{M_{A+1}-1}\prod\limits_{i=1}^{p}D_{n}\left(
s_{i}\right) \right\vert d\mathbf{\mu }\left( s_{1},...,s_{p}\right) \right)
^{1/p}
\end{equation*}%
\begin{equation*}
\times \left( \int\limits_{G_{m}^{p}}\frac{1}{M_{B}}\left\vert
\sum\limits_{l=M_{B}}^{M_{B+1}-1}\prod\limits_{j=1}^{p}D_{l}\left(
t_{j}\right) \right\vert d\mathbf{\mu }\left( t_{1},...,t_{p}\right) \right)
^{1/p}
\end{equation*}%
\begin{equation*}
\leq cp^{2}\left\Vert f\right\Vert _{C}.
\end{equation*}

Lemma \ref{mainlemma} is proved.
\end{proof}

\begin{lemma}
\label{BAest}Let $f\in C\left( G_{m}^{2}\right) $ and $p>0.$ Then 
\begin{eqnarray}
&&\frac{1}{nk}\sum\limits_{l=1}^{n}\sum\limits_{r=1}^{k}\left\vert
S_{lr}\left( f;x,y\right) -f\left( x,y\right) \right\vert ^{p}  \label{BA} \\
&\leq &c^{p}\cdot \left( p+1\right) ^{2p}\left\{ \frac{1}{n}%
\sum\limits_{l=1}^{n}\left( E_{l}^{\left( 1\right) }\left( f\right) \right)
^{p}+\frac{1}{k}\sum\limits_{r=1}^{k}\left( E_{r}^{\left( 2\right) }\left(
f\right) \right) ^{p}\right\} .  \notag
\end{eqnarray}
\end{lemma}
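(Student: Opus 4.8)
The plan is to dominate the double strong mean by the two one-dimensional best-approximation averages through a block decomposition adapted to the generalized number system $\{M_j\}$, feeding the pointwise estimate (\ref{est}) into the block bound of Lemma \ref{mainlemma}. First I fix $A,B$ with $M_A\le n<M_{A+1}$ and $M_B\le k<M_{B+1}$ and partition the ranges $1\le l\le n$, $1\le r\le k$ into the blocks $l\in[M_L,M_{L+1})$ $(0\le L\le A)$ and $r\in[M_R,M_{R+1})$ $(0\le R\le B)$, the top blocks being truncated at $n$ and $k$, which only lowers the number of summands. On the block $(L,R)$ I apply (\ref{est}) with the floor polynomial $S_{M_L,M_R}(f)$ and the elementary inequality $(a+b)^p\le 2^p(a^p+b^p)$ to obtain
\begin{equation*}
\left\vert S_{lr}(f;x,y)-f(x,y)\right\vert ^{p}\le c^{p}\left\vert S_{lr}(g_{LR};x,y)\right\vert ^{p}+c^{p}\left( E_{M_L,M_R}(f)\right)^{p},\qquad g_{LR}:=f-S_{M_L,M_R}(f),
\end{equation*}
so that the whole sum splits into a kernel part $I$ built from $|S_{lr}(g_{LR})|^{p}$ and a tail part $II$ built from $(E_{M_L,M_R}(f))^{p}$.

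For part $I$, on each block I invoke Lemma \ref{mainlemma} for $g_{LR}$ (with the roles of $A,B$ played by $L,R$) together with $\left\Vert g_{LR}\right\Vert_{C}=\left\Vert f-S_{M_L,M_R}(f)\right\Vert_{C}\le 2E_{M_L,M_R}(f)$ from (\ref{mixBA}), giving
\begin{equation*}
\sum_{l=M_L}^{M_{L+1}-1}\sum_{r=M_R}^{M_{R+1}-1}\left\vert S_{lr}(g_{LR};x,y)\right\vert^{p}\le c^{p}(p+1)^{2p}\,M_L M_R\left( E_{M_L,M_R}(f)\right)^{p}.
\end{equation*}
For part $II$ the summand is constant on the block, so its block sum is $(M_{L+1}-M_L)(M_{R+1}-M_R)\left( E_{M_L,M_R}(f)\right)^{p}$. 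In both parts I then replace $E_{M_L,M_R}(f)$ by $2E_{M_L}^{(1)}(f)+2E_{M_R}^{(2)}(f)$ via (\ref{PBA}), apply $(a+b)^p\le 2^p(a^p+b^p)$ once more, and factor the $L$- and $R$-summations.

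The only delicate step is to pass from the block-weighted sums $\sum_L M_L\left( E_{M_L}^{(1)}(f)\right)^{p}$ (and their analogues) back to the genuine averages $\tfrac1n\sum_{l=1}^n\left( E_l^{(1)}(f)\right)^{p}$. Here I use that $E_l^{(1)}(f)$ is non-increasing in $l$, so for $l\in[M_{L-1},M_L)$ one has $E_l^{(1)}(f)\ge E_{M_L}^{(1)}(f)$, whence
\begin{equation*}
\sum_{l=M_{L-1}}^{M_L-1}\left( E_l^{(1)}(f)\right)^{p}\ge (M_L-M_{L-1})\left( E_{M_L}^{(1)}(f)\right)^{p}=M_{L-1}(m_{L-1}-1)\left( E_{M_L}^{(1)}(f)\right)^{p}.
\end{equation*}
Combined with $M_L=m_{L-1}M_{L-1}$ and $m_{L-1}/(m_{L-1}-1)\le 2$ this yields $M_L\left( E_{M_L}^{(1)}(f)\right)^{p}\le 2\sum_{l=M_{L-1}}^{M_L-1}\left( E_l^{(1)}(f)\right)^{p}$, and summing over $L$ telescopes to $\sum_L M_L\left( E_{M_L}^{(1)}(f)\right)^{p}\le c\sum_{l=1}^n\left( E_l^{(1)}(f)\right)^{p}$; likewise $M_{R+1}\ge 2M_R$ gives $\sum_R M_R\le 2M_B\le 2k$. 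Assembling part $I$ in this way already produces the two terms of (\ref{BA}) with universal constants and the factor $(p+1)^{2p}$. I expect part $II$ to be the main obstacle: there the block weight is $M_L(m_L-1)$ rather than $M_L$, so the monotone comparison above leaves an extra factor $m_L$; to absorb it one must use that $G_m$ is a bounded Vilenkin group, so that $\sup_j m_j<\infty$ and the constant $c$ may depend on it (the same boundedness that underlies Lemma \ref{mainlemma}). With this, both $I$ and $II$ are dominated by the right-hand side of (\ref{BA}), which completes the proof.
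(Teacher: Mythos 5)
Your proposal is correct and follows essentially the same route as the paper's proof: the block decomposition along the generalized number system $\{M_j\}$, the splitting via (\ref{est}) with the floor polynomial $S_{M_L,M_R}(f)$, Lemma \ref{mainlemma} combined with (\ref{mixBA}) and (\ref{PBA}) for the block estimate, and the monotonicity of $E_l^{(1)}$, $E_r^{(2)}$ to pass from block-weighted sums back to the full averages. You are in fact slightly more explicit than the paper about where the boundedness of the Vilenkin group is used to absorb the factors $m_L-1$, $m_R-1$ into the constant.
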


\begin{proof}[Proof of Lemma \protect\ref{BAest}]
Since%
\begin{equation*}
\left( a+b\right) ^{\beta }\leq 2^{\beta }\left( a^{\beta }+b^{\beta
}\right) ,\beta >0
\end{equation*}%
from (\ref{est}), (\ref{PBA}), (\ref{mixBA}) and using Lemma \ref{mainlemma}
we get 
\begin{equation}
\frac{1}{M_{A}M_{B}}\sum\limits_{n=M_{A}}^{M_{A+1}-1}\sum%
\limits_{l=M_{B}}^{M_{B+1}-1}\left\vert S_{nl}\left( f;x,y\right) -f\left(
x,y\right) \right\vert ^{p}  \label{above}
\end{equation}%
\begin{equation*}
\leq \frac{2^{p}}{M_{A}M_{B}}\sum\limits_{n=M_{A}}^{M_{A+1}-1}\sum%
\limits_{l=M_{B}}^{M_{B+1}-1}\left\vert S_{nl}\left( f-S_{M_{A}M_{B}}\left(
f\right) ;x,y\right) \right\vert ^{p}
\end{equation*}%
\begin{equation*}
+\frac{2^{2p}}{M_{A}M_{B}}\left( M_{A+1}-M_{A}\right) \left(
M_{B+1}-M_{B}\right) E_{M_{A}M_{B}}^{p}\left( f\right)
\end{equation*}%
\begin{equation*}
\leq c^{p}\left( p+1\right) ^{2p}\left\Vert f-S_{M_{A}M_{B}}\left( f\right)
\right\Vert _{C}^{p}
\end{equation*}%
\begin{equation*}
+c^{p}\left( \left( E_{M_{A}}^{\left( 1\right) }\left( f\right) \right)
^{p}+\left( E_{M_{B}}^{\left( 2\right) }\left( f\right) \right) ^{p}\right)
\end{equation*}%
\begin{equation*}
\leq c^{p}\left( p+1\right) ^{2p}\left( \left( E_{M_{A}}^{\left( 1\right)
}\left( f\right) \right) ^{p}+\left( E_{M_{B}}^{\left( 2\right) }\left(
f\right) \right) ^{p}\right) .
\end{equation*}

Let $M_{L}\leq n<M_{L+1}$ and $M_{R}\leq k<M_{R+1}$. Then from (\ref{above})
we have%
\begin{equation*}
\frac{1}{nk}\sum\limits_{l=1}^{n}\sum\limits_{r=1}^{k}\left\vert
S_{lr}\left( f;x,y\right) -f\left( x,y\right) \right\vert ^{p}
\end{equation*}%
\begin{equation*}
\leq \frac{1}{nk}\sum\limits_{l=1}^{M_{L+1}-1}\sum\limits_{r=1}^{M_{R+1}-1}%
\left\vert S_{lr}\left( f;x,y\right) -f\left( x,y\right) \right\vert ^{p}
\end{equation*}%
\begin{equation*}
=\frac{1}{nk}\sum\limits_{A=0}^{L}\sum\limits_{B=0}^{R}\sum%
\limits_{l=M_{A}}^{M_{A+1}-1}\sum\limits_{r=M_{B}}^{M_{B+1}-1}\left\vert
S_{lr}\left( f;x,y\right) -f\left( x,y\right) \right\vert ^{p}
\end{equation*}%
\begin{equation*}
\leq \frac{c^{p}\left( p+1\right) ^{2p}}{nk}M_{A}M_{B}\sum\limits_{A=0}^{L}%
\sum\limits_{B=0}^{R}\left( \left( E_{M_{A}}^{\left( 1\right) }\left(
f\right) \right) ^{p}+\left( E_{M_{b}}^{\left( 2\right) }\left( f\right)
\right) ^{p}\right) 
\end{equation*}%
\begin{equation*}
\leq \frac{c^{p}\left( p+1\right) ^{2p}}{nk}\sum\limits_{A=0}^{L}\sum%
\limits_{B=0}^{R}\sum\limits_{l=M_{A-1}}^{M_{A}-1}\sum%
\limits_{r=M_{B-1}}^{M_{B}-1}\left( \left( E_{M_{A}}^{\left( 1\right)
}\left( f\right) \right) ^{p}+\left( E_{M_{B}}^{\left( 2\right) }\left(
f\right) \right) ^{p}\right) 
\end{equation*}%
\begin{equation*}
\leq \frac{c^{p}\left( p+1\right) ^{2p}}{nk}\sum\limits_{A=0}^{L}\sum%
\limits_{B=0}^{R}\sum\limits_{l=M_{A-1}}^{M_{A}-1}\sum%
\limits_{r=M_{B-1}}^{M_{B}-1}\left( \left( E_{l}^{\left( 1\right) }\left(
f\right) \right) ^{p}+\left( E_{r}^{\left( 2\right) }\left( f\right) \right)
^{p}\right) 
\end{equation*}%
\begin{equation*}
\leq \frac{c^{p}\left( p+1\right) ^{2p}}{nk}\sum\limits_{l=1}^{n}\sum%
\limits_{r=1}^{k}\left( \left( E_{l}^{\left( 1\right) }\left( f\right)
\right) ^{p}+\left( E_{r}^{\left( 2\right) }\left( f\right) \right)
^{p}\right) 
\end{equation*}%
\begin{equation*}
\leq c^{p}\cdot \left( p+1\right) ^{2p}\left\{ \frac{1}{n}%
\sum\limits_{l=1}^{n}\left( E_{l}^{\left( 1\right) }\left( f\right) \right)
^{p}+\frac{1}{k}\sum\limits_{r=1}^{k}\left( E_{r}^{\left( 2\right) }\left(
f\right) \right) ^{p}\right\} .
\end{equation*}

Lemma \ref{BAest} is proved.
\end{proof}

\section{Proofs of Main Results}

The Walsh-Paley version of Theorem \ref{estBA} were proved in \cite{GGSMH}.
Based on inequality (\ref{BA}) the same construction works for the Vilenkin
case. Therefore the proof of Theorem \ref{estBA} will be omitted.

\begin{proof}[Proof of Theorem 2]
a) It is easy to see that if $\varphi \in \Psi $, then $e^{\varphi }-1\in
\Psi .$ Besides, (\ref{sufcond}) implies the existence of a number $A$ such
that 
\begin{equation*}
\overline{\lim\limits_{u\rightarrow \infty }}\frac{e^{\varphi \left(
u\right) }-1}{e^{Au^{1/2}}-1}<\infty .
\end{equation*}%
Therefore, in view of Lemma \ref{gogoladze}, for the proof of Theorem \ref%
{strongconv} it is sufficient to prove that 
\begin{equation}
\lim\limits_{n,m\rightarrow \infty }\left\Vert \frac{1}{nm}%
\sum\limits_{l=1}^{n}\sum\limits_{r=1}^{m}\left( e^{A\left\vert S_{lr}\left(
f\right) -f\right\vert ^{1/2}}-1\right) \right\Vert _{C}=0.  \label{=0}
\end{equation}%
The validity of equality of (\ref{=0}) immediately follows from Theorem \ref%
{estBA}.

b)First of all let us prove the validity of point b) in the one-dimensional
case. In particular, prove that if $\psi \in \Psi $ and 
\begin{equation*}
\overline{\lim\limits_{u\rightarrow \infty }}\frac{\psi \left( u\right) }{{u}%
}=\infty
\end{equation*}%
there exists a function $f\in C\left( G_{m}\right) $ such that 
\begin{equation}
\overline{\lim\limits_{m\rightarrow \infty }}\frac{1}{m}\sum%
\limits_{l=1}^{m}\left( e^{\psi \left( \left\vert S_{l}\left( f;0\right)
-f\left( 0\right) \right\vert \right) }\right) =+\infty .  \label{1-dim}
\end{equation}

Let $\left\{ B_{k}:k\geq 1\right\} $ be an increasing sequence of positive
integers such that%
\begin{equation}
B_{j}>2B_{j-1},  \label{Bk>}
\end{equation}%
\begin{equation}
\frac{\psi \left( B_{j}\right) }{B_{j}}>\frac{5j\ln a}{c^{\prime }}\text{%
\thinspace \thinspace ,}  \label{>160}
\end{equation}%
where $c^{\prime }$ will be define below and $a:=\sup\limits_{j}m_{j}$.

Set 
\begin{equation*}
A_{j}:=\left[ \frac{j}{c^{\prime }}B_{j}\right] +1
\end{equation*}%
and 
\begin{equation*}
N_{A_{j}}:=\sum\limits_{k=A_{j-1}}^{A_{j}-1}\left[ \frac{m_{2k}}{2}\right]
M_{2k},
\end{equation*}%
then it is easy to see that%
\begin{equation}
N_{A_{k}}\leq a^{2A_{k}}.  \label{NAk}
\end{equation}

Set%
\begin{equation*}
f_{j}\left( x\right) :=\frac{1}{j+1}\sum\limits_{s=A_{j-1}}^{A_{j}-1}\sum%
\limits_{x_{2s+1}=0}^{m_{2s+1}-1}\cdots
\sum\limits_{x_{2A_{j}-1}=0}^{m_{2A_{j}}-1}\exp \left( -i\arg \left( 
\overline{D}_{N_{A_{j}}}\left( x\right) \right) \right)
\end{equation*}%
\begin{equation*}
\times \mathbb{I}_{I_{2A_{j}}\left(
0,...,0,x_{2s}=m_{2s}-1,x_{2s+1},...,x_{2A_{j}-1}\right) }\left( x\right) ,
\end{equation*}%
\begin{equation*}
f\left( x\right) :=\sum\limits_{j=1}^{\infty }f_{j}\left( x\right) ,f\left(
0\right) =0,
\end{equation*}%
where $\mathbb{I}_{E}$ is characteristic function of the set $E\subset G_{m}$%
.

It is easy to see that $f\in C\left( G_{m}\right) $.

We can write%
\begin{equation}
\left\vert S_{N_{A_{k}}}\left( f;0\right) -f\left( 0\right) \right\vert
=\left\vert S_{N_{A_{k}}}\left( f;0\right) \right\vert  \label{J1-J3}
\end{equation}%
\begin{equation*}
=\left\vert \int\limits_{G_{m}}f\left( t\right) \overline{D}%
_{N_{A_{k}}}\left( t\right) d\mu \left( t\right) \right\vert
\end{equation*}%
\begin{equation*}
\geq \left\vert \int\limits_{G_{m}}f_{k}\left( t\right) \overline{D}%
_{N_{A_{k}}}\left( t\right) d\mu \left( t\right) \right\vert
\end{equation*}%
\begin{equation*}
-\sum\limits_{j=k+1}^{\infty }\left\vert \int\limits_{G_{m}}f_{j}\left(
t\right) \overline{D}_{N_{A_{k}}}\left( t\right) d\mu \left( t\right)
\right\vert
\end{equation*}%
\begin{equation*}
-\sum\limits_{j=0}^{k-1}\left\vert \int\limits_{G_{m}}f_{j}\left( t\right) 
\overline{D}_{N_{A_{k}}}\left( t\right) d\mu \left( t\right) \right\vert
\end{equation*}%
\begin{equation*}
=J_{1}-J_{2}-J_{3}.
\end{equation*}

From the definition of the function $f$ we have%
\begin{equation*}
J_{1}=\frac{1}{k+1}\left\vert
\sum\limits_{s=A_{k-1}}^{A_{k}-1}\sum\limits_{t_{2s+1}=0}^{m_{2s+1}-1}\cdots
\sum\limits_{t_{2A_{k}-1}=0}^{m_{2A_{k}}-1}\right.
\end{equation*}%
\begin{equation*}
\left. \int\limits_{_{I_{2A_{k}}\left(
0,...,0,t_{2s}=m_{2s}-1,t_{2s+1},...,t_{2A_{k}-1}\right) }}\exp \left(
-i\arg \left( \overline{D}_{N_{A_{k}}}\left( t\right) \right) \right) 
\overline{D}_{N_{A_{k}}}\left( t\right) d\mu \left( t\right) \right\vert
\end{equation*}%
\begin{equation*}
=\frac{1}{k+1}\sum\limits_{s=A_{k-1}}^{A_{k}-1}\sum%
\limits_{t_{2s+1}=0}^{m_{2s+1}-1}\cdots
\sum\limits_{t_{2A_{k}-1}=0}^{m_{2A_{k}}-1}
\end{equation*}%
\begin{equation*}
\int\limits_{I_{2A_{k}}\left(
0,...,0,t_{2s}=m_{2s}-1,t_{2s+1},...,t_{2A_{k}-1}\right) }\left\vert
D_{N_{A_{k}}}\left( t\right) \right\vert d\mu \left( t\right) .
\end{equation*}%
Since (see \cite{GMJ} )%
\begin{equation*}
\left\vert D_{N_{A_{k}}}\left( t\right) \right\vert \geq cM_{2s}
\end{equation*}%
for 
\begin{equation*}
t\in I_{2s+1}\left( 0,...,0,t_{2s}=m_{2s}-1\right) ,s=A_{k-1},...,A_{k}-1
\end{equation*}%
from (\ref{Bk>}) we cam write%
\begin{equation}
J_{1}\geq \frac{c}{k}\sum\limits_{s=A_{k-1}}^{A_{k}-1}M_{2s}\sum%
\limits_{t_{2s+1}=0}^{m_{2s+1}-1}\cdots
\sum\limits_{t_{2A_{k}-1}=0}^{m_{2A_{k}}-1}\mu \left( I_{2A_{k}}\left(
0,...,0,t_{2s}=m_{2s}-1,t_{2s+1},...,t_{2A_{k}-1}\right) \right)  \label{J1}
\end{equation}%
\begin{equation*}
\geq \frac{c}{k}\left( A_{k}-A_{k-1}\right) \geq \frac{c_{0}A_{k}}{k}.
\end{equation*}

For $J_{2}$ we have%
\begin{equation}
J_{2}\leq \sum\limits_{j=k+1}^{\infty }\frac{1}{j+1}\sum%
\limits_{s=A_{j-1}}^{A_{j}-1}\frac{1}{M_{2s}}N_{A_{k}}  \label{J2}
\end{equation}%
\begin{equation*}
\leq \frac{c}{k}\sum\limits_{s=A_{k}}^{\infty }\frac{1}{M_{2s}}N_{A_{k}}\leq 
\frac{c}{k}.
\end{equation*}

By (\ref{dir2}) and from the construction of the function $f_{j}$ we can
write 
\begin{equation*}
\text{supp}\left( f_{j}\right) \cap \text{supp}\left( D_{N_{A_{k}}}\right)
=\varnothing ,j=1,2,...,k-1
\end{equation*}%
consequently%
\begin{equation}
J_{3}=0.  \label{J3}
\end{equation}

Combining (\ref{>160}), ( \ref{J1-J3})-(\ref{J3}) we conclude that%
\begin{equation*}
\left\vert S_{N_{A_{k}}}\left( f;0\right) \right\vert =\left\vert
S_{N_{A_{k}}}\left( f;0\right) -f\left( 0\right) \right\vert \geq \frac{%
c^{\prime }A_{k}}{k}\geq B_{k},
\end{equation*}%
\begin{equation*}
\psi \left( \left\vert S_{N_{A_{k}}}\left( f;0\right) \right\vert \right)
\geq \psi \left( B_{k}\right) \geq \frac{5k\ln a}{c^{\prime }}B_{k}\geq
5\left( A_{k}-1\right) \ln a.
\end{equation*}

Write $\varphi \left( u\right) =\lambda \left( u\right) \sqrt{u}$ and define 
$\psi \left( u\right) :=\lambda \left( u^{2}\right) u.$ Then%
\begin{equation*}
\overline{\lim\limits_{u\rightarrow \infty }}\frac{\psi \left( u\right) }{u}=+\infty .
\end{equation*}%
Therefore there exist a function $f\in C\left( G_{m}\right) $ for which%
\begin{equation}
\psi \left( \left\vert S_{N_{A_{k}}}\left( f,0\right) \right\vert \right)
>5\left( A_{k}-1\right) \ln a.  \label{ineq2}
\end{equation}%
Set%
\begin{equation*}
F\left( x,y\right) :=f\left( x\right) f\left( y\right) .
\end{equation*}%
It is easy to show that%
\begin{eqnarray*}
&&\varphi \left( \left\vert S_{N_{A_{k}},N_{A_{k}}}\left( F;0,0\right)
\right\vert \right) \\
&=&\varphi \left( \left\vert S_{N_{A_{k}}}\left( f;0\right) \right\vert
^{2}\right) \\
&=&\lambda \left( \left\vert S_{N_{A_{k}}}\left( f;0\right) \right\vert
^{2}\right) \left\vert S_{N_{A_{k}}}\left( f;0\right) \right\vert \\
&=&\psi \left( \left\vert S_{N_{A_{k}}}\left( f;0\right) \right\vert \right)
.
\end{eqnarray*}%
Consequently, from (\ref{ineq2}) and (\ref{NAk}) we have%
\begin{eqnarray*}
&&\frac{1}{N_{A_{k}}^{2}}\sum\limits_{i=1}^{N_{A_{k}}}\sum%
\limits_{j=1}^{N_{A_{k}}}e^{\varphi \left( \left\vert S_{ij}\left(
F;0,0\right) \right\vert \right) } \\
&\geq &\frac{1}{N_{A_{k}}^{2}}e^{\varphi \left( \left\vert
S_{N_{A_{k}},N_{A_{k}}}\left( F;0,0\right) \right\vert \right) } \\
&=&\frac{1}{N_{A_{k}}^{2}}e^{\psi \left( \left\vert S_{N_{A_{k}}}\left(
f;0\right) \right\vert \right) } \\
&\geq &\frac{e^{5\left( A_{k}-1\right) \ln a}}{a^{4A_{k}}}\rightarrow \infty 
\text{ as~\ }k\rightarrow \infty .
\end{eqnarray*}%
Theorem \ref{strongconv} is proved.
\end{proof}

\end{document}